\documentclass{article}
\usepackage{hyperref,url,float,graphicx,amsmath,amsthm,amssymb} 
\usepackage[margin=1.753in]{geometry}

\newtheorem*{theorem*}{Theorem}
\newtheorem{corollary}{Corollary}
\newtheorem{proposition}{Proposition}
\theoremstyle{definition}

\title{Greedy fusion}
\author{Andrew Schopieray}
\date{}

\begin{document}

\maketitle

\begin{abstract}
    We demonstrate that certain sequences of integer formal codegrees, motivated by the greedy algorithm for unit fraction decompositions of $1$, dictate the structure of fusion rings.  In particular, no fusion ring that has formal codegrees $2$, $3$, and $7$ is categorifiable.
\end{abstract}

\paragraph{Fusion rings, physical applications, and formal codegrees}  The ubiquity of fusion rings in mathematics and physics is derived from their conceptual simplicity.  For example, they arise when studying the representation theory of Hopf and vertex operator algebras, and also in the study of anyon models with applications to quantum computation.  Interested readers may refer to \cite{MR3242743,MR4642115,ericwang} and the references therein for more perspective.  The relaxed nature of the definition of fusion rings also makes it difficult to find a use for almost any of them.  The fusion rings that have been most applicable are those that are Grothendieck rings of fusion categories in the sense of \cite{MR3242743}.  Demonstrating that a fusion ring is \emph{categorifiable} in this way can be, at worst, a time-consuming and laborious process. So for the sake of all relevant shareholders across mathematics and physics, it is overwhelmingly helpful to have some necessary conditions for fusion rings to be categorifiable.  Many of these necessary conditions are a consequence of representation theory, including the \emph{formal codegrees} \cite{MR2576705}: a multiset of positive real numbers corresponding to irreducible representations. The formal codegrees of categorifiable fusion rings must satisfy a deluge of number-theoretic constraints that help to winnow the important fusion rings from the chaff.  For example, formal codegrees of categorifiable fusion rings must be totally greater than or equal to $1$, cyclotomic algebraic integers, algebraic $d$-numbers,  and must divide the global dimension of their categorification.  One can refer to \cite{MR2576705,MR3427429} for proofs of these results and many more. The most widespread formal codegrees are positive integers, which trivially satisfy most all number-theoretic requirements in practice. Each positive integer, for example, appears as the formal codegree of a categorifiable fusion ring, with $1$ only being a formal codegree of the trivial fusion ring, but $2$ appearing as a formal codegree of all of the character rings of the dihedral groups of order $2n$ with $n$ an odd integer.  All representation categories of finite groups, and more generally all integral fusion categories, have integer formal codegrees. So answering the question of what combinations of integers, or more generally algebraic integers, arise as formal codegrees of fusion categories has both modern and classical importance.

\par In this note, we characterize the \emph{greedy fusion rings} whose formal codegrees follow Sylvester's sequence (see (\ref{sylvester})).  These are extreme examples of near-integral fusion rings \cite{dong2024nearintegralfusion} in the sense that they are fusion rings of rank $n$ with a fusion subring of rank $k$ for all $1\leq k\leq n$.  Our main result (Proposition \ref{thetheorem}) is that any fusion ring whose formal codegrees contain a truncation of Sylvester's sequence must contain a greedy fusion subring.  As a corollary, any fusion ring of rank $n+1$ whose formal codegrees include the first $n$ entries of Sylvester's sequence is isomorphic to a greedy fusion ring.
\par The consequences of results such as these for the study of fusion categories are strong.  For example, there are infinitely many fusion rings of rank greater than $3$ whose formal codegrees contain $2$, $3$, and $7$, and also satisfy all number-theoretic restrictions which are currently known.  But our main result implies such a fusion ring contains the greedy fusion ring of rank $4$, which is not categorifiable \cite[Lemma 5.4]{alekseyev2025classifyingintegralgrothendieckrings}.  One can also interpret these results anywhere fusion categories arise; for example, our results can be written in terms of the existence or nonexistence of certain conjugacy class sizes of finite groups (outlined in the following section), dimensions of irreducible representations of Hopf algebras, dimensions of simple objects in modular fusion categories, etc.  We end our note with open-ended questions to motivate further investigations.

\paragraph{Basic notions} Let $R$ be a ring with identity $1_R$ which is a free abelian group under the operation of addition, and let $B=\{b_1,\ldots,b_n\}$ for some $n\in\mathbb{Z}_{\geq1}$ be a distinguished basis such that if $b_jb_k=\sum_{\ell=1}^nc_{j,k}^\ell b_\ell$, then $c_{j,k}^\ell\in\mathbb{Z}_{\geq0}$.  Such a ring with $1_R\in B$ has been referred to as a unital $\mathbb{Z}_{\geq0}$-ring of finite rank in the literature \cite[Section 3.1]{MR3242743}.  Many unital $\mathbb{Z}_{\geq0}$-rings have an additional symmetry which we will refer to as duality.  This symmetry is an involution $b\mapsto b^\ast$ of $B$ such that, when extended linearly, is an anti-involution of $R$, and $1_R$ is a summand of $bb^\ast$ with multiplicity $1$ while $1_R$ does not appear in $bb'$ for any other basis elements $b'\neq b^\ast$.  A unital $\mathbb{Z}_{\geq0}$-ring with duality is a \emph{fusion ring}.

\par There are only a few basic notions and terms we will need regarding fusion rings.  First, the basis $B$ will be implied in future statements; the cardinality of $B$ is known as the rank of $R$. Second, the nonnegative integer structure coefficients $c_{j,k}^\ell$ in our definition are referred to as the fusion rules or fusion coefficients of $R$.  Thirdly, there is a unique ring homomorphism $\mathrm{FPdim}:R\to\mathbb{C}$ which takes nonnegative values on $B$ known as the Frobenius-Perron dimension \cite[Section 3.3]{MR3242743}. And lastly, the eigenvalues of multiplication by $\sum_{b\in B}bb^\ast$ in $R$ are known as the \emph{formal codegrees} of $R$.  These eigenvalues are traces of irreducible representations $\varphi$ of $R\otimes_\mathbb{Z}\mathbb{C}$ appearing with multiplicity $\dim(\varphi)$ so we will often denote them by $f_\varphi$ to indicate the corresponding representation.  For example, if $\varphi:R\to\mathbb{C}$ is any ring homomorphism, which we may briefly call a character of the ring, the corresponding formal codegree is simply $f_\varphi=\sum_{b\in B}\varphi(b)\varphi(b^\ast)=\sum_{b\in B}|\varphi(b)|^2$.  In particular, we denote the formal codegree $f_{\mathrm{FPdim}}=:\mathrm{FPdim}(R)$ and refer to it as the Frobenius-Perron dimension of $R$; this is always the largest formal codegree of $R$, but other irreducible representations may have this same formal codegree as well. We will abbreviate $d_j:=\mathrm{FPdim}(b_j)$ for any indexed basis elements of $R$ for brevity.

\newpage

\par The most familiar example of fusion rings to many are the integral group rings $\mathbb{Z}G$ for finite groups $G$.  The formal codegrees in this case correspond to irreducible representations $\varphi$ of $G$ and the corresponding formal codegree is $|G|/\dim(\varphi)$.  Similarly, the character ring $R_G$ of $G$ has formal codegrees $|G|/|C|$ where $|C|$ is the order of a conjugacy class of $G$.  For example, consider $D_5$, the dihedral group of order $10$. The integral group ring $\mathbb{Z}D_5$ has formal codegrees $10$ and $10$ with multiplicity $1$ corresponding to the two distinct one-dimensional representations of $D_5$, and $5$ and $5$, each with multiplicity $2$, corresponding to the two nonisomorphic two-dimensional irreducible representations of $D_5$. Alternatively, $R_{D_5}$ has formal codegrees $2$, $5$, $10$, and $10$ all with multiplicity $1$, corresponding to the four conjugacy classes of $D_5$.  Note that $\mathrm{FPdim}(\mathbb{Z}G)=\mathrm{FPdim}(R_G)$ although they often have different formal codegrees; in general formal codegrees are not enough to distinguish fusion rings; the smallest example is $\mathbb{Z}C_2^2$ versus $\mathbb{Z}C_4$.  The rings $\mathbb{Z}G$ and $R_G$ are integral in the sense that the Frobenius-Perron character $\mathrm{FPdim}:R\to\mathbb{C}$ is integer-valued.

\par We say a fusion ring $R$ is \emph{categorifiable} if $R$ is the Grothendieck ring of a fusion category over $\mathbb{C}$ in the sense of \cite{MR3242743}. For example the fusion rings $\mathbb{Z}G$ are categorifiable, being the Grothendieck rings of $\mathrm{Vec}_G$, the category of $G$-graded vector spaces, and the fusion rings $R_G$ are categorifiable, being the Grothenieck rings of $\mathrm{Rep}(G)$, the categories of representations of $G$. It is clear that our main result applies to integral group rings: the dimension of irreducible representations of a finite group $G$ are bounded by $\sqrt{|G|}$, hence $f_\varphi=|G|/\dim(\varphi)\geq\sqrt{|G|}$ which implies $2$ is a formal codgree only if $|G|\leq4$.  All such groups are abelian so we conclude $\mathbb{Z}C_2$ is the only example of a formal codegree of $2$ in this family.  It is less obvious that our main result applies to the character rings $R_G$. A formal codegree of $2$ corresponds to a conjugacy class $C$ of $G$ with $|G|=2|C|$, and we encourage the reader to show as an exercise that this limits $G$ to a certain family of Frobenius groups, for which conjugacy classes of size $|G|/3$ and $|G|/7$ cannot simultaneously occur.  For example, the character rings $R_{D_n}$ with $n$ an odd integer all have formal codegree $2$, but only $R_{D_3}$ has formal codegree $3$ and only $R_{D_7}$ has formal codegree $7$.

\paragraph{Formal codegrees and Sylvester's sequence} Let $R$ be a fusion ring and $\mathrm{Irr}(R)$ be the set of isomorphism classes of irreducible representations of $R\otimes_\mathbb{Z}\mathbb{C}$.  The formal codegrees of $R$ satisfy the numerical constraint \cite[Proposition 2.10]{MR3427429}:
\begin{equation}\label{constraint}
    1=\sum_{\varphi\in\mathrm{Irr}(R)}\frac{\dim(\varphi)}{f_\varphi}.
\end{equation}
For example, let $G$ be the order-24 group $G:=\mathrm{SL}_2(\mathbb{F}_3)$. The group $G$ has conjugacy classes of size $1,1,4,4,4,4,6$, so the formal codegrees of $R_G$ satisfy
\begin{equation}
    1=\frac{1}{4}+\frac{1}{6}+\frac{1}{6}+\frac{1}{6}+\frac{1}{6}+\frac{1}{24}+\frac{1}{24}.
\end{equation}
This illustrates that if the formal codegrees of a fusion ring are all integers, (\ref{constraint}) is akin to finding unit fraction decompositions of $1$.  This viewpoint was most notably leveraged to abstract algebra by E.\ Landau \cite{MR1511192} to show that for any positive integer $k$, there are finitely many finite groups up to isomorphism with exactly $k$ conjugacy classes.  We guide the interested reader to the survey paper \cite{MR4544007} for more on this historically significant subject.  Similar techniques have been fundamental to studying integral fusion categories and their relatives.  The bottleneck to this technique is that the number of decompositions of $1$ into $k$ unit fractions grows quickly, to the extent that the total number of such decompositions is only known at this time for very small $k$.  We strongly emphasize that for arbitrary fusion rings this exact approach is entirely inadequate, since even for fusion rings of rank $2$, there are infinitely many decompositions of $1$ if irrational formal codegrees are allowed. Specifically, for a fusion ring with basis $\{1_R,x\}$ and nontrivial fusion rule $xx=1_R+kx$ for $k\in\mathbb{Z}_{\geq0}$, the formal codegrees of $R$ satisfy
\begin{equation}
    1=\frac{1}{(1/2)(k^2+4+k\sqrt{k^2+4})}+\frac{1}{(1/2)(k^2+4-k\sqrt{k^2+4})}.
\end{equation}
When all of the formal codegrees of a fusion ring are integers, the numerical constraint in (\ref{constraint}) roughly says that the formal codegrees cannot all be small, i.e.\ each unit fraction in the sum cannot be very large; the strategy of choosing the largest unit fraction to be included without the sum reaching $1$ is known as the \emph{greedy algorithm} which has lead to numerous interesting number-theoretic investigations over the past centuries.

\par Related to the greedy algorithm is \emph{Sylvester's sequence} \cite[OEIS A000058]{oeis}: a positive integer sequence defined by $a_{n+1}:=1+\prod_{k=0}^na_k$ for all $n\in\mathbb{Z}_{\geq1}$ with $a_0=2$. The first few entries, all that fit on a single line, are
\begin{equation}\label{sylvester}
2, 3, 7, 43, 1807, 3263443, 10650056950807, 113423713055421844361000443.    
\end{equation}
The product definition of Sylvester's sequence is equivalent to a sum definition, which we will need for our main result: $a_0=2$ and
\begin{equation}\label{quadraticrecursion}
a_{n+1}=1+\prod_{k=0}^na_k=1+a_n\prod_{k=0}^{n-1}a_k=1+a_n(a_n-1)=a_n^2-a_n+1.   
\end{equation}
One should prove as an exercise $\sum_{k=0}^{n-1}\frac{1}{a_k}=\frac{a_n-2}{a_n-1}$ for $n>0$, which implies that Sylvester's sequence describes the least solution $x_0\leq x_1\leq\cdots\leq x_n$ to $\sum_{j=0}^n\frac{1}{x_j}$ when ordered lexicographically, which is $x_0=a_0,x_1=a_1,\ldots,x_n=a_n-1$.

\par For $n\in\{1,2\}$, the solutions $1=1/2+1/2$ and $1=1/2+1/3+1/6$ are realized by the formal codegrees of fusion rings ($\mathbb{Z}C_2$ and $R_{D_3}$) and moreover by categorifiable fusion rings.  Although there are fusion rings whose formal codegrees realize these least solutions generated by Sylvester's sequence for all positive integers $n$, we will show they are only categorifiable in these two cases.

\paragraph{Greedy fusion and the main result} Define the \emph{greedy fusion rings} $R_n$ for $n\in\mathbb{Z}_{\geq1}$ inductively starting with $R_1\cong\mathbb{Z}C_2$, and $R_n$ for $n\in\mathbb{Z}_{\geq2}$ as a fusion ring of rank $n+1$ containing $R_{n-1}$ as a fusion subring with basis $\{r_0,\ldots,r_{n-1}\}$ ordered by increasing Frobenius-Perron dimension, and one additional basis element $r_n$.  Since $R_{n-1}$ is a fusion subring, it is closed under duality, hence $r_k^\ast=r_k$ inductively for all $0\leq k\leq n$.

\par For all $0\leq k\leq n-1$, $c_{n,k}^k=c_{k,n}^k=c_{k,k}^n=0$ \cite[Proposition 3.1.6]{MR3242743} and by dimension counting, $c_{n,n}^k=c_{n,k}^{n}=c_{k,n}^{n}=d_k$. In particular, there is only one unknown fusion rule, which we choose to be $c_{n,n}^{n}:=a_{n-1}-2$. In other words,
\begin{equation}\label{fusion}
 r_nr_n=(a_{n-1}-2)r_n+\sum_{j=0}^{n-1}d_jr_j.
\end{equation}
Any choice of fusion coefficient $c_{n,n}^n\in\mathbb{Z}_{\geq0}$ produces a fusion ring with formal codegrees $a_0,a_1,\ldots,a_{n-2}$; our particular choice is only to ensure the remaining two formal codegrees are related to Sylvester's sequence as well.   So with this choice, we have an associative (and commutative) product, and hence a fusion ring of rank $n+1$ and we claim that, by induction, $d_n=a_{n-1}-1$ and $\mathrm{FPdim}(R_n)=a_n-1$. This is true for $R_1$, and measuring the Frobenius-Perron dimension of (\ref{fusion}) yields
\begin{align}
d_n^2=(a_{n-1}-2)d_n+\sum_{j=0}^{n-1}d_j^2&=(a_{n-1}-2)d_n+\mathrm{FPdim}(R_{n-1}) \\
&=(a_{n-1}-2)d_n+(a_{n-1}-1)
\end{align}
by the inductive hypothesis. So we have
\begin{equation}
d_n=\frac{1}{2}\left(a_{n-1}-2+\sqrt{(a_{n-1}-2)^2+4(a_{n-1}-1)}\right)=a_{n-1}-1
\end{equation}
by the recursive relation in (\ref{quadraticrecursion}). By this same relation, we verify that
\begin{align}
    \mathrm{FPdim}(R_n)=d_n^2+\sum_{k=0}^{n-1}d_k^2&=(a_{n-1}-1)^2+(a_{n-1}-1)=a_n-1.
\end{align}
By construction, we have a chain of containment $R_1\subset R_2\subset\cdots\subset R_n$ for all $n\in\mathbb{Z}_{\geq1}$.  The character table of $R_n$ is illustrated in Figure \ref{fig:char} and verified in the proof of Proposition \ref{thetheorem} with $f_{\psi_j}=a_{j-1}$ for all $1\leq j\leq n$.

\begin{figure}[H]
    \centering
    \begin{equation}
    \begin{array}{|c|c|cccccc|}\hline
         & \mathrm{FPdim} & \psi_1 & \psi_2 & \psi_3 & \psi_4 & \cdots & \psi_n \\\hline
         r_0 & 1 & 1 & 1 & 1 & 1 & \cdots & 1 \\
         r_1 & 1 & -1 & 1 & 1 & 1 & \cdots & 1 \\
         r_2 & 2 & 0 & -1 & 2 & 2 &\cdots & 2 \\
         r_3 & 6 & 0 & 0 & -1 & 6 & \cdots & 6 \\
         r_4 & 42 & 0 & 0 & 0 & -1 & \cdots & 42 \\
         \vdots & \vdots & \vdots & \vdots& \vdots & \vdots & \ddots & \vdots \\
         r_n & a_{n-1}-1 & 0 & 0 & 0 & 0 & \cdots & -1 \\\hline
         \end{array}
\end{equation}
    \caption{The character table of the greedy fusion ring $R_n$}
    \label{fig:char}
\end{figure}

\begin{proposition}\label{thetheorem}
    Let $R$ be a fusion ring and $n\in\mathbb{Z}_{\geq1}$.  If the formal codegrees of $R$ include $a_0,a_1,\ldots,a_{n-1}$ then $R_n\subset R$ is a fusion subring.
\end{proposition}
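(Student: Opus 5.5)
The plan is induction on $n$, building the basis $r_0,\ldots,r_{n-1}$ of $R_n$ inside $R$ one element at a time. Each new element is found by studying a single character $\varphi$ of $R$ with $f_\varphi=a_{n-1}$.

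\emph{Step 1: the relevant representations are one-dimensional.} Use (\ref{constraint}) together with the identity $\sum_{k=0}^{n-1}1/a_k=(a_n-2)/(a_n-1)$. Suppose an irreducible representation with formal codegree $a_k$ had dimension at least $2$, or suppose two of the $a_k$ came from the same representation. Then the left side of (\ref{constraint}) would gain at least an extra $1/a_{n-1}$. The remaining budget is only $1/(a_n-1)<1/a_{n-1}$, which is a contradiction. So each $a_k$ is $f_{\varphi_k}$ for a character $\varphi_k:R\to\mathbb{C}$, and these characters are distinct. Since $a_k\in\mathbb{Z}$, every Galois conjugate of $\varphi_k$ also has formal codegree $a_k$. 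Hence the multiplicity argument also shows that the values of $\varphi_k$ are fixed by all Galois automorphisms, i.e.\ $\varphi_k$ is rational, hence integer-valued on $B$.

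\emph{Step 2: the base case.} For $n=1$, we have $\sum_{b\in B}\varphi_1(b)^2=2$ with $\varphi_1(1_R)=1$ and integer values. So exactly one $b\neq 1_R$ has $\varphi_1(b)=\pm1$, and $\varphi_1$ vanishes on all other basis elements. Orthogonality with $\mathrm{FPdim}$ gives
\[
\sum_{b\in B}\varphi_1(b)\,\mathrm{FPdim}(b^\ast)=0 ,
\]
which reads $1\pm d_b=0$. Hence $\varphi_1(b)=-1$ and $d_b=1$, so $b$ is invertible of order $2$ and $\{1_R,b\}$ spans a copy of $R_1$.

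\emph{Step 3: the inductive step.} Assume $R_{n-1}\subset R$ with basis $r_0,\ldots,r_{n-2}$, and let $\varphi=\varphi_{n-1}$. The restriction of $\varphi$ to $R_{n-1}$ is a character of $R_{n-1}$. It is either $\mathrm{FPdim}$ or one of the $\psi_j$ from Figure \ref{fig:char}, and so $\sum_{b\in R_{n-1}}\varphi(b)^2$ lies in $\{a_0,\ldots,a_{n-2},a_{n-1}-1\}$.

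I must exclude the possibility that the restriction is some $\psi_j$. This is the step I expect to be the main obstacle. My first attempt is to use orthogonality $\sum_b\varphi(b)\varphi_{j-1}(b)=0$ in $R$, organizing $B$ into left $R_{n-1}$-orbits as in \cite[Proposition 3.1.6]{MR3242743}. On each orbit, multiplication by $r_{j-1}$ should force agreement or cancellation in a way that contradicts $\varphi\neq\varphi_{j-1}$.

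Granting that the restriction is $\mathrm{FPdim}$, the elements outside $R_{n-1}$ carry the remaining integer squares, which sum to $a_{n-1}-(a_{n-1}-1)=1$. So exactly one basis element $b\notin R_{n-1}$ has $\varphi(b)=\pm1$, and $\varphi$ vanishes on the others. Orthogonality with $\mathrm{FPdim}$ now gives $(a_{n-1}-1)\pm d_b=0$. Therefore $\varphi(b)=-1$ and $d_b=a_{n-1}-1$. Since $\varphi(b^\ast)=\overline{\varphi(b)}=-1$, uniqueness gives $b^\ast=b$.

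\emph{Step 4: closure of the span.} It remains to check that $\{r_0,\ldots,r_{n-2},b\}$ spans a subring with the multiplication (\ref{fusion}).

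First consider $r_jb$. We have $\varphi(r_jb)=-d_j$ and $\mathrm{FPdim}(r_jb)=d_jd_b$. Since $\varphi$ equals $\mathrm{FPdim}$ on $R_{n-1}$, equals $-1$ on $b$, and vanishes on the remaining basis elements, comparing these two values forces $r_jb=d_jb$.

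Next write $bb=x+yb+z$ with $x\in R_{n-1}$ and $z$ supported outside $R_{n-1}\cup\{b\}$. By Frobenius reciprocity, the multiplicity of $r_j$ in $bb$ equals the multiplicity of $b$ in $r_jb$, which is $d_j$. So $x=\sum_jd_jr_j$ and $\mathrm{FPdim}(x)=a_{n-1}-1$. Applying $\varphi$ gives $\varphi(bb)=1=\mathrm{FPdim}(x)-y$, so $y=a_{n-1}-2$. Applying $\mathrm{FPdim}$ and using (\ref{quadraticrecursion}) gives $\mathrm{FPdim}(z)=d_b^2-(a_{n-1}-2)(a_{n-1}-1)-(a_{n-1}-1)=0$, so $z=0$. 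This reproduces (\ref{fusion}) and yields $R_n\subset R$.
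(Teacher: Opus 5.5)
Your outline follows the paper's proof closely. The multiplicity-one and integrality argument from (\ref{constraint}) with Galois invariance, the base case via orthogonality with $\mathrm{FPdim}$, isolating $b$ from the leftover square-sum $1$, and the closure computation in Step 4 are all the paper's. However, the step you leave as ``granted'' in Step 3 is the crux of the induction, so as written the proof has a genuine gap. The orbit decomposition you propose is not needed, and ``should force agreement or cancellation'' is not yet an argument.

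The missing ingredient is something you have already computed but did not put into your induction hypothesis. When $\varphi_{j-1}$ (with $f_{\varphi_{j-1}}=a_{j-1}$) was produced, you showed it takes the values $d_0,\ldots,d_{j-1},-1$ on $r_0,\ldots,r_j$. You also showed it vanishes on \emph{every other basis element of $R$}, because those squares already exhaust $a_{j-1}$. The paper's inductive hypothesis records exactly this support information, and yours should too.

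With it, orthogonality of $\varphi$ and $\varphi_{j-1}$ (distinct, since $a_{n-1}\neq a_{j-1}$) involves only the self-dual elements $r_0,\ldots,r_j\in R_{n-1}$:
\[
0=\sum_{b\in B}\varphi(b)\varphi_{j-1}(b^\ast)=\sum_{i=0}^{j-1}d_i\,\varphi(r_i)-\varphi(r_j).
\]
If $\varphi|_{R_{n-1}}$ were $\psi_j=\varphi_{j-1}|_{R_{n-1}}$, the right side would equal $\sum_{i\le j}\varphi_{j-1}(r_i)^2=a_{j-1}\neq 0$. That is the contradiction you were after.

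The paper uses the same equations constructively. Solving them successively for $j=1,\ldots,n-1$, starting from $\varphi(r_0)=1$, gives $\varphi(r_j)=\sum_{i<j}d_i^2=d_j$. So $\varphi|_{R_{n-1}}=\mathrm{FPdim}$ follows directly, without invoking the character table of $R_{n-1}$. Once this is in place, the rest of your Steps 3 and 4 goes through as in the paper.

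Two small slips:
\begin{itemize}
\item $R_{n-1}$ has basis $r_0,\ldots,r_{n-1}$, not $r_0,\ldots,r_{n-2}$.
\item Your inequality $1/(a_n-1)<1/a_{n-1}$ is an equality when $n=1$; indeed $\mathbb{Z}C_2$ has formal codegree $2$ twice. That case needs a separate sentence. The paper's displayed inequality has the same blemish at $j=0$.
\end{itemize}
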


\begin{proof}
    Assume for some $n\in\mathbb{Z}_{\geq1}$, that $R$ is a fusion ring with formal codegrees $a_0,a_1,\ldots,a_{n-1}$.  Note that all of these formal codegrees appear with multiplicity $1$ since they are chosen greedily, i.e.\ for all $0\leq j<n$, $a_j$ has multiplicity $1$ since
    \begin{equation}
        \frac{1}{a_j}+\sum_{i=0}^j\frac{1}{a_i}>1.
    \end{equation}
    Therefore, each formal codegree $a_j$ corresponds to a distinct ring homomorphism $\psi_{j+1}:R\to\mathbb{C}$ such that $f_{\psi_{j+1}}=a_j$ for all $0\leq j\leq n-1$, hence the $\psi_{j+1}$ are all integer-valued. We will show inductively that $R_1\subset R_2\subset\cdots R_n\subset R$ and that the character table of $R_n$ coincides with Figure \ref{fig:char}.
    \par For the base case, a formal codegree of $a_0=2$ corresponding to an integer-valued character $\psi_1:R\to\mathbb{C}$ implies $\psi_1(r_1)=\pm1$ for a unique nontrivial basis element $r_1\in R$ and $\psi_1(b)=0$ for all other nontrivial basis elements $b$. Orthogonality of $\psi_1$ with $\mathrm{FPdim}$ \cite[Lemma 8.14.1]{MR3242743} implies $d_1=-1/\psi_1(r_1)$ thus $\psi_1(r_1)=-1$, and moreover $d_1=1$.  Since $\psi_1(b)=0$ for all other nontrivial basis elements $b$ of $R$, then $ R_\mathrm{pt}\cong\mathbb{Z}C_2\cong R_1$.
    
    \par Now assume that, for some $k\in\mathbb{Z}_{\geq0}$ with $k\leq n$, we have shown there exists a fusion subring $R_{k-1}\subset R$ with basis $r_0,r_1,\ldots,r_{k-1}$ such that $\psi_{k-1}(r_j)=d_j$ for all $0\leq j<k-1$, $\psi_{k-1}(r_{k-1})=-1$, and $\psi_{k-1}(b)=0$ for all other basis elements $b$ of $R$. Consider the integer-valued character $\psi_k$ with $f_{\psi_k}=a_{k-1}$.  Orthogonality with $\psi_1,\ldots,\psi_{k-1}$ implies, iteratively, that $\psi_k(r_j)=d_j$ for all $0\leq j<k$, hence
    \begin{equation}
        \sum_{j=0}^{k-1}\psi_k(r_j)^2=\sum_{j=0}^{k-1}d_j^2=\mathrm{FPdim}(R_{k-1})=a_{k-1}-1.
    \end{equation}
    Therefore $\psi_k$ being integer-valued implies there exists a unique basis element $r_k$ with $r_k\neq r_j$ for $0\leq j<k$ such that $\psi_k(r_k)=\pm1$ and as before, orthogonality of $\psi_k$ with $\mathrm{FPdim}$ implies $0=\sum_{j=0}^{k-1}d_j^2\pm d_k$ and therefore $\psi_k(r_k)=-1$ and moreover $d_k=a_{k-1}-1$.
    
    \par It remains to show that $r_0,\ldots,r_k$ generate a fusion subring isomorphic to $R_k$.  To this end, since $R_{k-1}\subset R$ is a fusion subring, $0=c_{j,j}^k=c_{j,k}^j=c_{k,j}^j$    for all $0\leq j<k$ \cite[Proposition 3.1.6]{MR3242743}.  Now applying the character $\psi_k$ to the fusion of $r_j$ and $r_k$ for $0\leq j<k$ yields $-d_j=-c_{j,k}^k$, hence $c_{k,k}^j=c_{j,k}^k=d_j$.  Applying the character $\psi_k$ to the fusion of $r_k$ with itself gives $1=\sum_{j=0}^{k-1}d_j^2-c_{k,k}^k$, hence $c_{k,k}^k=a_{k-1}-2$.  Lastly, we compute
    \begin{equation}
        \sum_{j=0}^{k-1}d_j^2+(a_{k-1}-2)d_k=(a_{k-1}-1)+(a_{k-1}-2)(a_{k-1}-1)=(a_{k-1}-1)^2=d_k^2.
    \end{equation}
    Therefore $c_{k,k}^b=0$ for all basis elements different from $r_0,\ldots,r_k$.  Moreover, we have shown $r_0,\ldots,r_k$ form a basis for a fusion subring $R_k\subset R$.
\end{proof}

\begin{corollary}
    Let $R$ be a fusion ring of rank $n+1$ for some $n\in\mathbb{Z}_{\geq1}$.  Then the formal codegrees of $R$ include $a_0,a_1,\ldots,a_{n-1}$ if and only if $R\cong R_n$.
\end{corollary}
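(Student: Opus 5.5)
The plan is to deduce both directions from Proposition \ref{thetheorem} and from the computation of the formal codegrees of $R_n$ summarized in Figure \ref{fig:char}.

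\emph{The forward direction.} Suppose $R$ has rank $n+1$ and its formal codegrees include $a_0,\ldots,a_{n-1}$. Proposition \ref{thetheorem} gives a fusion subring $R_n\subset R$. A fusion subring is spanned by a subset of the basis of $R$, and $R_n$ has rank $n+1$, which is also the rank of $R$. So the two bases coincide, the inclusion is an equality of based rings, and $R\cong R_n$. This step is essentially free.

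\emph{The converse.} We must check that $R_n$ really has $a_0,\ldots,a_{n-1}$ among its formal codegrees. Define $\psi_j$ for $1\le j\le n$ as in Figure \ref{fig:char}: $\psi_j(r_i)=d_i$ for $i<j$, $\psi_j(r_j)=-1$, and $\psi_j(r_i)=0$ for $i>j$. The plan is to verify directly that each $\psi_j$ is a ring homomorphism on the basis, using the fusion rules (\ref{fusion}) of the nested subrings.

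\begin{itemize}
\item For $i,i'<j$, the product $r_ir_{i'}$ lies in $R_{j-1}$. On $R_{j-1}$ the map $\psi_j$ restricts to $\mathrm{FPdim}$, so multiplicativity holds.
\item For $r_ir_j$ with $i<j$, we have $r_ir_j=d_ir_j$ by the fusion rules. Both sides evaluate to $-d_i$.
\item For $r_jr_j$, the value of $\psi_j$ on the right-hand side is $-(a_{j-1}-2)+(a_{j-1}-1)=1=\psi_j(r_j)^2$.
\item Products involving some $r_m$ with $m>j$ expand as $r_mx$ with $x\in R_{m-1}$. Every summand of such a product is $r_m$ itself, or, for $r_mr_m$, also elements of $R_{m-1}$. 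So we need $\psi_j(r_mr_m)=0$, which reduces to $\sum_{i<m}\psi_j(r_i)d_i=0$.
\end{itemize}

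The last identity is exactly orthogonality of $\psi_j$ with $\mathrm{FPdim}$ on $R_{m-1}$. It can be checked directly:
\[
\sum_{i<j}d_i^2-d_j=(a_{j-1}-1)-(a_{j-1}-1)=0.
\]
This bookkeeping over the cases $m>j$ is the only mildly fiddly point, and it is where I would be most careful.

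Given the homomorphisms, compute the formal codegrees:
\[
f_{\psi_j}=\sum_{i<j}d_i^2+1=(a_{j-1}-1)+1=a_{j-1}.
\]
Together with $\mathrm{FPdim}(R_n)=a_n-1$, these account for all $n+1$ characters of the commutative ring $R_n$ of rank $n+1$. Hence the formal codegrees of $R_n$ are exactly $a_0,\ldots,a_{n-1},a_n-1$, and in particular they include $a_0,\ldots,a_{n-1}$. Since formal codegrees are an isomorphism invariant, this completes the converse.
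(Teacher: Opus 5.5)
Your proof is correct and follows the same route as the paper. The forward direction is Proposition~\ref{thetheorem} plus the observation that a fusion subring of full rank is the whole ring, and the converse reads off the characters $\psi_j$ with $f_{\psi_j}=a_{j-1}$ from the character table of $R_n$. Your explicit check that each $\psi_j$ is multiplicative on the basis, including the identity $\sum_{i<j}d_i^2-d_j=0$ that handles $r_mr_m$ for $m>j$, supplies a verification that the paper only asserts when it says the table is ``verified in the proof'' of the proposition.
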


\begin{corollary}
    No fusion ring with formal codegrees $2$, $3$, and $7$ is categorifiable.
\end{corollary}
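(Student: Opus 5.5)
The plan is to reduce the statement to Proposition \ref{thetheorem} together with the known noncategorifiability of the rank-$4$ greedy fusion ring. Since $2,3,7$ are exactly $a_0,a_1,a_2$, Proposition \ref{thetheorem} with $n=3$ applies to any fusion ring $R$ whose formal codegrees include $2$, $3$ and $7$. It gives a fusion subring $R_3\subset R$ with basis $r_0,r_1,r_2,r_3$, Frobenius--Perron dimensions $1,1,2,6$, and fusion rules given by (\ref{fusion}). In particular
\[
r_3r_3=5r_3+2r_2+r_1+r_0 .
\]

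Next, suppose for contradiction that $R$ is the Grothendieck ring of a fusion category $\mathcal{C}$. The subring $R_3$ is closed under duality and spanned by basis elements. So the simple objects of $\mathcal{C}$ whose classes are $r_0,\ldots,r_3$ span a full fusion subcategory $\mathcal{D}\subset\mathcal{C}$. This holds because any subobject of a tensor product of such simples is again one of them. Then $\mathcal{D}$ is a fusion category with Grothendieck ring $R_3$, so $R_3$ would be categorifiable.

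This contradicts \cite[Lemma 5.4]{alekseyev2025classifyingintegralgrothendieckrings}, which states that the greedy fusion ring of rank $4$ is not categorifiable. Hence $R$ is not categorifiable.

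I expect the only real obstacle to be checking that the rank-$4$ ring in the cited lemma is the same as $R_3$ as defined here. This comes down to matching the fusion rules: $r_1$ is of order $2$, $r_2^2=r_0+r_1+r_2$ (the $R_{D_3}$ rules), $r_jr_3=d_jr_3$ for $j<3$, and $r_3^2$ as displayed above. If the citation needed independent support, I would first look for an obstruction to categorifying $R_3$ itself. Since $\mathrm{FPdim}(R_3)=42$ and $\mathcal{D}$ is integral, one would use the structure of integral fusion categories of dimension $42$, which are weakly group-theoretical by the classification of small-dimension cases. One would then rule out a simple object of dimension $6$ whose square contains itself with multiplicity $5$. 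That step, however, is exactly the cited lemma, so I would rely on it.
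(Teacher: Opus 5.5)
Your proposal is correct and follows the paper's proof. Proposition \ref{thetheorem} with $n=3$ gives a fusion subring $R_3\subset R$, which would be categorified by the corresponding full fusion subcategory (a step the paper leaves implicit and you justify), contradicting \cite[Lemma 5.4]{alekseyev2025classifyingintegralgrothendieckrings}; for independent support, the paper's sketch is more direct than your weakly group-theoretical fallback, namely that the Drinfeld center of a categorification of $R_3$ would contain a Tannakian subcategory $\mathrm{Rep}(G)$ with the fusion rules of $R_3$, yet no group of order $42$ has irreducible representations of dimensions $1,1,2,6$.
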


\begin{proof}
    Proposition \ref{thetheorem} implies that such a fusion ring contains a fusion subring isomorphic to $R_3$, which is not categorifiable. A terse proof is given in \cite[Lemma 5.4]{alekseyev2025classifyingintegralgrothendieckrings} via computer and leveraging previous results on premodular categories. To fill in the details, one can compute by hand that the double of any categorification of $R_3$ has a Tannakian subcategory with the fusion rules of $R_3$, but there is no group of order $42$ with irreducible representations of dimensions $1$, $1$, $2$, and $6$.
\end{proof}

\paragraph{Consequences and future directions} Formal codegrees and unit fraction decompositions of $1$ have been utilized in numerous efforts to classify fusion categories of small rank (refer to \cite{alekseyev2025classifyingintegralgrothendieckrings,alekseyev2026classificationintegralmodulardata,MR2869100}, for example and references therein).  In particular, \cite[Section 2.4]{alekseyev2025classifyingintegralgrothendieckrings} contains more questions related to this paper.  The combination of $1/2$, $1/3$, and $1/7$ does not often appear in unit decompositions of $1$ at small ranks, but for ranks larger than $3$ it always appears, and appears with more and more frequency as the rank increases.  For example, there are over $20\,000$ unit fraction decompositions of $1$ with $7$ terms that include $1/2$, $1/3$, and $1/7$.  Identifying sequences of integers that can never occur in the formal codegrees of a categorification therefore decreases the computational effort required for these tasks. This also begs the question of whether other combinations of positive integers are forbidden from categorifications, or even from fusion rings themselves. Clearly the constraint in (\ref{constraint}) should be incorporated into the question to make it nontrivial, so we ask
\begin{center}
    \emph{Which sets of positive integers $\{x_1,\ldots,x_n\}\subset\mathbb{Z}_{\geq2}$ such that $\sum_{j=1}^n\frac{1}{x_j}<1$ never simultaneously appear as the formal codegrees of a fusion ring/category?}
\end{center}
For example, the reader should convince themself as an exercise that there does not exist a fusion ring whose formal codegrees include $2$, $4$, and $5$. Combinations that cannot produce sensible fusion at all are, of course, less interesting than those that are prevalent among fusion rings but nonexistent among fusion categories.

\par Lastly, note that the existence of the formal codegree $2$ implies the existence of a nontrivial invertible element $x$ of order $2$ by Proposition \ref{thetheorem}.  If such a fusion ring is categorified by a spherical fusion category (see \cite[Section 4.7]{MR3242743}), then the second Frobenius-Schur indicator $\nu_2$ of $x$ is $\pm1$.  It was shown by the author in \cite[Corollary 3.13]{MR4655273} that $\nu_2(x)=-1$ implies that $x$ acts fixed-point freely on all basis elements.  But if $x$ of order $2$ arises from the existence of a formal codegree $f_\psi=2$ for a ring homomorphism $\psi:R\to\mathbb{C}$, then unless $\psi=\mathrm{FPdim}$, i.e. $R\cong\mathbb{Z}C_2$, we may apply $\psi$ to the fusion $yy^\ast$ for any other nontrivial basis element $y$ to yield $0=|\psi(y)|^2=1-c_{y,y^\ast}^x$.  Therefore $c_{y,y^\ast}^x=c_{x,y}^y=1$. Hence $x$ fixes all nontrivial basis elements except itself, and moreover $\nu_2(x)=1$.  It is likely that other categorical properties like Frobenius-Schur indicators can be deduced solely from the existence of small integer formal codegrees.  More research in this direction may illustrate more formal codegrees that are incompatible with categorification, or prove structural results about categorifications, when they exist.



\bibliographystyle{plainurl}
\bibliography{bib}

\end{document}